\newtheorem{theorem}{Theorem}[section]
\newtheorem{lemma}[theorem]{Lemma}
\newtheorem{proposition}[theorem]{Proposition}
\newcommand{\PP}{\mathbb{P}}
\newcommand{\sint}{\bullet}
\newcommand{\cadlag}{c\`{a}dl\`{a}g}
\newcommand{\E}{\mathbb{E}}
\renewcommand{\P}{\mathbb{P}}
\newcommand{\R}{\mathbb{R}}
\newcommand{\eqref}[1]{(\ref{#1})}
\begin{document}
\begin{frontmatter}

\title{A trajectorial interpretation of Doob's martingale inequalities\thanksref{T1}}
\thankstext{T1}{Supported by the European Research Council (ERC) under
Grant FA506041, by the Vienna Science and Technology Fund (WWTF) under Grant MA09-003,
by the Austrian Science Fund FWF through p21209.}
\runtitle{Trajectorial interpretation: Doob's inequalities}

\begin{aug}
\author[A]{\fnms{B.}~\snm{Acciaio}\corref{}\ead[label=e1]{beatrice.acciaio@univie.ac.at}},
\author[B]{\fnms{M.}~\snm{Beiglb\"ock}},
\author[B]{\fnms{F.}~\snm{Penkner}},
\author[B]{\fnms{W.}~\snm{Schachermayer}}
\and
\author[B]{\fnms{J.}~\snm{Temme}}
\runauthor{B. Acciaio et al.}
\affiliation{University of Vienna and University of Perugia,
University of Vienna,
University of Vienna,
University of Vienna and
University of Vienna}
\address[A]{B. Acciaio\\
Faculty of Mathematics\\
University of Vienna\\
Vienna\\
Austria\\
and\\
Department of Economics\\
\quad Finance and Statistics\\
University of Perugia\\
Perugia\\
Italy\\
\printead{e1}} 
\address[B]{M. Beiglb\"ock\\
F. Penkner\\
W. Schachermayer\\
J. Temme\\
Faculty of Mathematics\\
University of Vienna\\
Vienna\\
Austria}
\end{aug}

\received{\smonth{3} \syear{2012}}
\revised{\smonth{5} \syear{2012}}

%
\begin{abstract}
We present a unified approach to Doob's $L^p$ maximal inequalities for
\mbox{$1\leq p<\infty$}. The novelty of our method is that these
martingale inequalities are obtained as consequences of elementary
\textit{deterministic} counterparts. The latter have
a natural interpretation in terms of robust hedging. Moreover, our
deterministic inequalities lead to new versions of Doob's maximal
inequalities. These are best possible in the sense that equality is
attained by properly chosen martingales.
\end{abstract}

%
\begin{keyword}[class=AMS]
\kwd[Primary ]{60G42}
\kwd{60G44}
\kwd[; secondary ]{91G20}
\end{keyword}
\begin{keyword}
\kwd{Doob maximal inequalities}
\kwd{martingale inequalities}
\kwd{pathwise hedging}
\end{keyword}

\end{frontmatter}

\section{Introduction}
In this paper we derive estimates for the running maximum of a
martingale or nonnegative submartingale in terms of its terminal value.
Given a function $f$ we write $\bar f(t)=\sup_{ u\leq t} f(u)$. Among
other results, we establish the following martingale inequalities.

\begin{theorem}\label{thmdoob}
Let $(S_n)_{n=0}^T$ be a nonnegative submartingale.
Then
%
\renewcommand{\theequation}{Doob-$L^p$}
\begin{equation}
\label{intro1}
\hspace*{-44pt}\E \bigl[\bar S_T^p \bigr]\leq
\biggl(\frac{p}{p-1}\biggr)^p \E\bigl[S_T^p
\bigr] ,\qquad 1<p<\infty,
\end{equation}\vspace*{-6pt}
\renewcommand{\theequation}{Doob-$L^1$}
\begin{equation}
\label{intro2}
\E[\bar S_T]\leq\frac{e}{e-1}\bigl[ \E\bigl[S_T\log(S_T)\bigr]+\E \bigl[S_0
\bigl(1-\log(S_0) \bigr) \bigr]\bigr].
\end{equation}
\end{theorem}

Here \eqref{intro1} is the classical Doob $L^p$-inequality, $p\in(1,
\infty)$~\cite{Do90}, Theorem~3.4. The second result \eqref{intro2}
represents the Doob $L^1$-inequality in the sharp form derived by Gilat
\cite{Gi86} from the $L\log L$~Hardy--Littlewood inequality.

\subsection*{Trajectorial inequalities}

The novelty of this note is that the above martingale inequalities are
established as consequences of \textit{deterministic} counterparts. We
postpone the general statements (Proposition~\ref{propdoobpath}) and
illustrate the spirit of our approach by a simple result that may be
seen as the \textit{trajectorial version} of Doob's $L^2$-inequality.

\textit{Let $s_0, \ldots, s_T$ be real numbers. Then}
%
\renewcommand{\theequation}{Path-$L^2$}
\begin{equation}
\label{DetL2}
\bar{s}_T^2+ 4 \Biggl[ \sum
_{n=0}^{T-1} \bar{s}_n(
s_{n+1}-s_n)\Biggr] \leq4 s_T^2.
\end{equation}
Inequality \eqref{DetL2} is completely elementary and the proof is
straightforward: it suffices to rearrange terms and to complete squares.
The significance of \eqref{DetL2} lies rather in the fact that it
implies \eqref{DoobL2}. Indeed, if $S=(S_n)_{n=1}^T$ is a
nonnegative submartingale, we may apply \eqref{DetL2} to each
trajectory of $S$.
The decisive observation is that, by the submartingale property,
%
\renewcommand{\theequation}{\arabic{section}.\arabic{equation}}
\setcounter{equation}{0}
\begin{equation}
\label{eqsubmart} \E\Biggl[ \sum_{n=0}^{T-1}
\bar S_n( S_{n+1}-S_n)\Biggr]\geq0,
\end{equation}
hence, \eqref{DoobL2} follows from \eqref{DetL2} by taking expectations.

\subsection*{Inequalities in continuous time---sharpness}

Passing to the continuous time setting, it is clear that \eqref
{intro1} and \eqref{intro2} carry over verbatim to the case where
$S=(S_t)_{t\in[0,T]}$ is a nonnegative c\`adl\`ag submartingale, by
the usual limiting argument. It is also not surprising that in
continuous time one has trajectorial counterparts of those
inequalities, the sum in \eqref{DetL2} being replaced by a (carefully
defined) integral.
Moreover, in the case $p=1$, the inequality can be \textit{attained} by a
martingale in continuous time; cf.~\cite{Gi86} and~\cite{GrPe98}.
Notably, this does not hold for $1<p<\infty$. We discuss this for the
case $p=2$ in the $L^2$-norm formulation. Given a nonnegative \cadlag\
submartingale $S=(S_t)_{t\in[0,T]}$, we have
%
\renewcommand{\theequation}{Doob-$L^2$}
\begin{equation}
\label{DoobL2}
\|\bar S_T\|_2 \leq2
\|S_T\|_2.
\end{equation}
Dubins and Gilat~\cite{DuGi78} showed that the constant $2$ in \eqref
{DoobL2} is optimal, that is, cannot be replaced by a strictly
smaller constant. It is also natural to ask whether equality can be
attained in \eqref{DoobL2}. It turns out that this happens only in the
trivial case $S\equiv0$; otherwise, the inequality is strict.
Keeping in mind that equality in \eqref{intro2} is attained, one may
also try to improve on \eqref{DoobL2} by incorporating the starting
value of the martingale. Indeed, we obtain the following result.
%
\begin{theorem}\label{SharkDoob}
$\!\!$For every nonnegative \cadlag\ submartingale $S=(S_t)_{t\in[0,T]}$,
%
\renewcommand{\theequation}{\arabic{section}.\arabic{equation}}
\setcounter{equation}{1}
\begin{equation}
\label{Optimal1} \|\bar S_T\|_2 \leq\|S_T
\|_2+ \|S_T-S_0\|_2.\vadjust{\goodbreak}
\end{equation}
Inequality \eqref{Optimal1} is sharp. More precisely, given
$x_0,x_1\in
\R$, $0< x_0\leq x_1$, there exists a positive, continuous martingale
$S=(S_t)_{t\in[0,T]}$ such that $\|S_0\|_2=x_0,\|S_T\|_2=x_1 $ and
equality holds in \eqref{Optimal1}.
\end{theorem}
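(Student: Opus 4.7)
The plan is first to pass from the non-negative submartingale $S$ (with $\E[S_T^2]<\infty$, else \eqref{Optimal1} is vacuous) to the closed martingale $\tilde M_t:=\E[S_T\mid\F_t]$. Since $S_t\leq\tilde M_t$ a.s.\ by the submartingale property, one has $\bar S_T\leq\bar{\tilde M}_T$ and $\tilde M_T=S_T$. Moreover, the $L^2$-orthogonal decomposition $S_T-S_0=(S_T-\tilde M_0)+(\tilde M_0-S_0)$ (the second summand being $\F_0$-measurable) yields $\|\tilde M_T-\tilde M_0\|_2\leq\|S_T-S_0\|_2$. So it suffices to establish \eqref{Optimal1} for the non-negative martingale $\tilde M$.

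\textbf{Step 2: Prove the inequality for non-negative $L^2$-martingales.}
For such $M$, I would use the pathwise inequality
\begin{equation*}
\bar s_T^2 + 2\sum_{n=0}^{T-1}\bar s_n(s_{n+1}-s_n) \leq 2\bar s_T s_T - s_0^2 \qquad (s_0,\dots,s_T\geq 0),
\end{equation*}
proved by induction on $T$, the step reducing to $\bar s_{n+1}^2-\bar s_n^2\leq 2 s_{n+1}(\bar s_{n+1}-\bar s_n)$ (true because $\bar s_{n+1}\in\{\bar s_n,s_{n+1}\}$). Applying this to $M$, taking expectations (the martingale transform has mean zero) and using Cauchy--Schwarz yields
\[
\|\bar M_T\|_2^2+\|M_0\|_2^2 \leq 2\E[\bar M_T M_T] \leq 2\|\bar M_T\|_2\|M_T\|_2,
\]
which, viewed as a quadratic in $\|\bar M_T\|_2$, rearranges to $(\|\bar M_T\|_2-\|M_T\|_2)^2\leq\|M_T\|_2^2-\|M_0\|_2^2=\|M_T-M_0\|_2^2$ by the martingale Pythagoras identity. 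This is \eqref{Optimal1}; the \cadlag\ case follows by dyadic approximation.

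\textbf{Step 3: Construct the extremal martingale.}
For $x_0=x_1$, take $S\equiv x_0$. For $x_0<x_1$, set $\alpha:=x_0/x_1$, $c:=1+\sqrt{1-\alpha^2}\in(1,2)$, $\beta:=c/(c-1)>2$, $m:=x_0/c$, and let $\mu$ be the Pareto-type law on $[m,\infty)$ with survival function $(m/x)^\beta$. Routine calculations will give $\int y\,d\mu(y)=x_0$, $\int y^2\,d\mu(y)=x_1^2$, and Az\'ema--Yor's barycentre function $\Psi_\mu(x)=cx$. The Az\'ema--Yor Skorokhod embedding then produces a positive continuous martingale $S$ on $[0,T]$ (a time-change of a Brownian motion started at $x_0$, stopped at $\tau:=\inf\{t:B_t\leq\bar B_t/c\}$) with $S_0=x_0$, $\law(S_T)=\mu$, and $\bar S_T=cS_T$ a.s. The proportionality of $\bar S_T$ and $S_T$ turns the Cauchy--Schwarz step of Step 2 into an equality, and hence \eqref{Optimal1} into an equality.

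\textbf{Main obstacle.} I expect the reduction in Step 1 to be the crucial conceptual move: applied directly to a non-negative submartingale, the pathwise inequality of Step 2 only gives the weaker bound $\|\bar S_T\|_2\leq\|S_T\|_2+\sqrt{\|S_T\|_2^2-\|S_0\|_2^2}$ (the sharpening to $\|S_T-S_0\|_2$ is a genuine martingale phenomenon, via $\E[S_T S_0]=\|S_0\|_2^2$). The real content of Step 3 is the calibration of the tail exponent $\beta$: imposing both $\bar S_T\propto S_T$ and the first moment $x_0$ forces $\beta=c/(c-1)$, and then matching the second moment $x_1^2$ forces precisely $c=1+\sqrt{1-(x_0/x_1)^2}$ --- the very value at which the Cauchy--Schwarz bound in Step 2 becomes tight.
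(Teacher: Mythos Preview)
Your proposal is correct and close to the paper's argument, but the two differ in how the submartingale case is handled, and this is worth noting.

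In Step~2 and Step~3 you do exactly what the paper does: the pathwise inequality $\bar s_T^2 + 2\sum\bar s_n(s_{n+1}-s_n) \leq 2\bar s_T s_T - s_0^2$ is precisely the discrete version of the paper's Proposition~\ref{prop:doob_path_cont} for $p=2$; taking expectations and applying Cauchy--Schwarz gives $\|\bar M_T\|_2^2+\|M_0\|_2^2\leq 2\|\bar M_T\|_2\|M_T\|_2$, which the paper writes as \eqref{PostCS}; and the sharpness construction is the same Az\'ema--Yor stopping of Brownian motion (you carry out the calibration of the Pareto tail explicitly, whereas the paper only asserts that $\|S_T\|_2$ sweeps $(x_0,\infty)$ as $\alpha$ varies in $(1,2)$).

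The genuine difference is Step~1. You reduce a submartingale $S$ to the closed martingale $\tilde M_t=\E[S_T\mid\F_t]$, using $S_t\leq\tilde M_t$ and the orthogonal decomposition of $S_T-S_0$. The paper instead keeps the submartingale, invokes the Doob--Meyer decomposition $S=M+A$, carries the compensator term $-2\E[S_0A_T]$ through the estimate, and finally observes that
\[
\|S_T\|_2^2 - 2\E[S_0A_T] - \|S_0\|_2^2 = \|S_T-S_0\|_2^2
\]
for every submartingale (a one-line computation using $\E[S_TS_0]=\E[S_0^2]+\E[S_0A_T]$). Your reduction is lighter in machinery --- no continuous-time pathwise integral, no Doob--Meyer --- while the paper's approach has the advantage of tracking exactly when each inequality is tight, which is what yields their characterisation of equality in Theorem~\ref{SharkDoobLp}. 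Your remark in the ``Main obstacle'' paragraph that the direct pathwise route only gives $\sqrt{\|S_T\|_2^2-\|S_0\|_2^2}$ for submartingales is accurate, and the paper's Doob--Meyer bookkeeping is precisely the alternative device that recovers the full $\|S_T-S_0\|_2$ without passing to $\tilde M$.
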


In Theorem~\ref{SharkDoobLp} we formulate the result of Theorem~\ref
{SharkDoob} for $1<p<\infty$, thus establishing an optimal a priori
estimate on $\|\bar S_T\|_p$.

We emphasize that the idea that \eqref{intro1} can be improved by
incorporating the starting value $S_0$ into the inequality is not new.
Cox~\cite{Co84}, Burkholder~\cite{Bu91} and Peskir~\cite{Pe98a} show that
%
\renewcommand{\theequation}{\arabic{section}.\arabic{equation}}
\setcounter{equation}{2}
\begin{equation}
\label{CoBuPeIneq} \E\bigl[\bar S_T^2 \bigr]\leq4 \E
\bigl[S_T^2 \bigr] - 2\E\bigl[S_0^2
\bigr].
\end{equation}
Here the constants $4$ (resp., $2$) are sharp (cf.~\cite{Pe98a}) with
equality in \eqref{CoBuPeIneq} holding iff $S\equiv0$.\footnote{That
\eqref{Optimal1} implies \eqref{CoBuPeIneq} follows from a simple
calculation. Alternatively, the sharpness of \eqref{CoBuPeIneq} is a
consequence of the fact that equality in \eqref{Optimal1} can be
attained for all possible values of $\|S_0\|_2, \|S_T\|_2$.}

\subsection*{Financial interpretation}
We want to stress that \eqref{DetL2} has a natural interpretation in
terms of mathematical finance.

Financial intuition suggests that we consider the positive martingale
$S=(S_n)_{n=0}^{T}$ as the process describing the price evolution of
an asset under the so-called ``risk-neutral measure,'' so that $\Phi
(S_0, \ldots, S_T)=(\bar{S}_T)^2$ [resp., $\varphi(S_T)=S_T^2$] have the natural interpretation of a so-called
\textit{exotic} option (resp., a \textit{European} option) written on $S$.
In finance, a European option $\varphi$ (resp., exotic option $\Phi$)
is a function that depends on the final value $S_T$ of $S$ (resp., on
its whole path $S_0,\ldots,S_T$). The seller of the option $\Phi$ pays
the buyer the random amount $\Phi(S_0,\ldots,S_T)$ after its expiration
at time $T$. Following~\cite{Ba00} we may interpret $\mathbb{E}[\Phi]$
as the price that the buyer pays for this option at time $0$; cf. \cite
{Sh04}, Chapter 5, for an introductory survey on risk-neutral pricing.

Here we take the point of view of an economic agent who sells the
option $\Phi$ and wants to protect herself in all possible scenarios
$\omega\in\Omega$, that is, against all possible values $\Phi
(S_0(\omega
),\ldots,S_T(\omega))$, which she has to pay to the buyer of~$\Phi$.
This means that she will trade in the market in order to arrive at time
$T$ with a portfolio value which is at least as high as the value
of $\Phi$. By buying a European option $\varphi(S_T)=S_T^2$, she can
clearly protect herself in case the asset reaches its maximal value at
maturity $T$. However, she still faces the risk of $S$ having its
highest value at some time $n$ before $T$. To protect against that
possibility, one way for her is to ``go short'' in the underlying asset
(i.e., to hold negative positions in $S$). By scaling, her protecting
strategy should be proportional to the running maximum $\bar{S}_n$. At
this point our educated guess is to follow the strategy
$H_n=-4\bar{S}_n$, meaning that from time $n$ to time $n+1$ we keep an
amount $H_n$ of\vadjust{\goodbreak} units of the asset $S$ in our portfolio.
The portfolio strategy produces the following value at time $T$:
%
\begin{equation}
\label{introh} \sum_{n=0}^{T-1}
H_n(S_{n+1}-S_n)=-4\sum
_{n=0}^{T-1} \bar{S}_n(S_{n+1}-S_n).
\end{equation}
The reason why we have chosen the special form $H_n=-4\bar{S}_n$ now
becomes apparent when considering \eqref{DetL2} and \eqref
{eqsubmart}. In
our ``financial mind experiment'' this may be interpreted as follows:
by buying $4$ European options $S_T^2$ and following the
self-financing trading strategy $H=(H_n)_{n=0}^{T-1}$, the seller of
the option $\Phi=(\bar{S}_T)^2$ covers her position at maturity $T$,
whatever the outcome $(S_0(\omega),\ldots,S_T(\omega))$ of the price
evolution is. Thus an upper bound for the price of the exotic option
$\Phi$ in terms of the European option $\varphi$ is given by
\[
\E\bigl[ (\bar{S}_T)^2\bigr]\leq4 \E
\bigl[S_T^2\bigr].
\]

We note that Henry-Labord\`{e}re~\cite{He11b} derived \eqref{intro1}
in a related fashion.

The idea of robust pricing and pathwise hedging of exotic options
seemingly goes back to Hobson~\cite{Ho98b} (see also~\cite{BrHoRo01a,CoOb09,HoKl11}). We refer the reader to~\cite{Ho11} for a thorough
introduction to the topic.

\subsection*{Organization of the paper} In Section~\ref{secproofdoob} we prove Doob's
inequalities \eqref{intro1} and \eqref{intro2} after establishing the
trajectorial counterparts \eqref{eqdoobpath} and \eqref{lempweq}.
We prove Theorem~\ref{SharkDoobLp} and its $L^p$ version in Section~\ref{sec3}.

\section{\texorpdfstring{Proof of Theorem~\protect\ref{thmdoob}}{Proof of Theorem 1.1}}\label{secproofdoob}
The aim of this section is to prove Doob's maximal inequalities in
Theorem~\ref{thmdoob} by means of deterministic inequalities, which
are established in Proposition~\ref{propdoobpath} below. The proof of
Theorem~\ref{thmdoob} is given at the end of this section. Regarding
\eqref{intro1}, we prove the stronger result
%
\begin{equation}
\label{eqstrongdoob} \E \bigl[\bar S_T^p
\bigr]\leq\biggl(\frac{p}{p-1}\biggr)^p \E\bigl[S_T^p
\bigr] -\frac{p}{p-1}\E\bigl[S_0^p\bigr],\qquad 1<p<\infty,
\end{equation}
which was obtained in~\cite{Co84,Pe98a}.

\begin{proposition}\label{propdoobpath}
Let $s_0,\ldots, s_T$ be nonnegative numbers.
\begin{longlist}[(II)]
\item[(I)] For $1< p <\infty$ and $h(x):=-\frac
{p^2}{p-1}x^{p-1}$, we have
%
\renewcommand{\theequation}{Path-$L^p$}
\begin{equation}
\label{eqdoobpath}
\bar s_T^p
\leq\sum_{i=0}^{T-1}h(\bar s_i)
(s_{i+1}-s_i)-\frac
{p}{p-1}s_0^p+
\biggl(\frac{p}{p-1} \biggr)^p s_T^p.
\end{equation}
\item[(II)] For $ h(x):=-\log (x )$, we have
%
\renewcommand{\theequation}{Path-$L^1$}
\begin{equation}
\label{lempweq}
\qquad\qquad\bar s_T\leq\frac{e}{e-1}\Biggl(\sum
_{i=0}^{T-1}h(\bar s_i)
(s_{i+1}-s_i)+s_T\log (s_T
)+s_0\bigl(1-\log(s_0)\bigr)\Biggr).
\end{equation}
\end{longlist}
\end{proposition}
We note that for $p=2$, inequality \eqref{eqdoobpath} is valid also
in the case where $s_0, \ldots, s_T$ are real (possibly negative)
numbers. A continuous time counterpart of \eqref{eqdoobpath} is given
in Remark~\ref{ContPathDoob} below.

In the proof of Proposition~\ref{propdoobpath}, we need the following
identity.
%
\begin{lemma}\label{lempartint}
Let $s_0,\ldots, s_T$ be real numbers and $h\dvtx \R\to\R$ any function. Then
%
\renewcommand{\theequation}{\arabic{section}.\arabic{equation}}
\setcounter{equation}{1}
\begin{equation}
\label{eqsortofpartint} \sum
_{i=0}^{T-1}h(\bar s_i)
(s_{i+1}-s_i)=\sum_{i=0}^{T-1}h(
\bar s_i) (\bar s_{i+1}-\bar s_i) +h(\bar
s_T) (s_T-\bar s_T).
\end{equation}
\end{lemma}
\begin{pf}
This follows by properly rearranging the summands. Indeed, observe that
for a term on the right-hand side there are two possibilities: if $\bar
s_{i+1}=\bar s_i$ (resp., $s_{T}=\bar s_T$) it simply vanishes.
Otherwise, it equals a sum $h(\bar s_{k})(s_{k+1}-s_{k})+\cdots+
h(\bar s_m)(s_{m+1}-s_m)$ where $\bar s_{k}=\cdots=\bar s_m$. In total,
every summand on the left-hand side of \eqref{eqsortofpartint} is
accounted exactly once on the right.\vspace*{-1.5pt}
\end{pf}

We note that Lemma~\ref{lempartint} is a special case of \cite
{ObYo06}, Lemma 3.1.\vspace*{-1pt}
\begin{pf*}{Proof of Proposition~\ref{propdoobpath}}(I)
By convexity, $x^p+px^{p-1}(y-x)\leq y^p, x,y\geq0$. Hence, Lemma~\ref
{lempartint} yields
%
\begin{eqnarray}
\label{fordoobeq} \sum_{i=0}^{T-1}h(
\bar s_i) (s_{i+1}-s_i)&=&-\frac{p^2}{p-1}\sum
_{i=0}^{T-1}\bar s_i^{p-1}(
\bar s_{i+1}-\bar s_i) -\frac{p^2}{p-1}\bar
s_T^{p-1}(s_T-\bar s_T)
\nonumber
\\[-1.5pt]
&\geq&-\frac{p}{p-1}\sum_{i=0}^{T-1}\bar
s_{i+1}^{p}-\bar s_{i}^{p}-
\frac{p^2}{p-1}\bar s_T^{p-1}(s_T-\bar
s_T)
\\[-1.5pt]
&=&p\bar s_T^p-\frac{p^2}{p-1}\bar s_T^{p-1}s_T+
\frac{p}{p-1}\bar s_0^p.
\nonumber
\end{eqnarray}
We therefore have
%
\begin{eqnarray}
\label{eqalgebraiceq} &&\sum_{i=0}^{T-1}h(
\bar s_i) (s_{i+1}-s_i)+ \biggl(
\frac
{p}{p-1} \biggr)^p s_T^p-
\frac{p}{p-1}\bar s_0^p-\bar s_T^p
\nonumber
\\[-6pt]
\\[-6pt]
\nonumber
&&\qquad\geq (p-1)\bar s_T^p-\frac{p^2}{p-1}\bar
s_T^{p-1}s_T+ \biggl(\frac
{p}{p-1}
\biggr)^p s_T^p.
\end{eqnarray}
To establish \eqref{eqdoobpath} it is thus sufficient to show that
the right-hand side of \eqref{eqalgebraiceq} is nonnegative.
Defining $c$ such that $S_n=c\bar S_n$ amounts to showing that\vspace*{-1pt}
%
\begin{equation}
\label{gc} g(c)=(p-1)-\frac{p^2}{p-1}c+ \biggl(\frac{p}{p-1}
\biggr)^p c^p\geq0.\vspace*{-1pt}
\end{equation}
Using standard calculus we obtain that $g$ reaches its minimum at $\hat
c= \frac{p-1}p$ where $g(\hat c)=0$.

(II) By Lemma~\ref{lempartint} we have
\begin{eqnarray*}
&&\sum_{i=0}^{T-1}h(\bar s_i)
(s_{i+1}-s_i)\\[-1.5pt]
&&\qquad= -\sum_{i=0}^{T-1}
\log (\bar s_i) (\bar s_{i+1}-\bar s_i)-\log(
\bar s_T) (s_{T}-\bar s_T)
\\[-1.5pt]
&&\qquad\geq\sum_{i=0}^{T-1} \bigl(\bar
s_{i+1}-\bar s_i-\bar s_{i+1}\log (\bar
s_{i+1} )+\bar s_{i}\log (\bar s_{i} ) \bigr)-\log
(\bar s_T ) (s_{T}-\bar s_T)
\\[-1.5pt]
&&\qquad=\bar s_T-s_0+s_0\log(s_0)-s_T
\log(\bar s_T),
\end{eqnarray*}
where the inequality follows from the convexity of $x\mapsto-x+x\log
(x )$, \mbox{$x>0$}. If $s_T=0$ then the above inequality shows that
\eqref{lempweq} holds true. Otherwise, we have
\[
\bar s_T\leq\sum_{i=0}^{T-1}h(
\bar s_i) (s_{i+1}-s_i)+s_0-s_0
\log (s_0)+s_T\log(s_T)+s_T\log
\biggl(\frac{\bar s_T}{s_T} \biggr).
\]
Note that the function $x\mapsto x\log (y/x )$ on
$(0,\infty
)$, for any fixed $y>0$, has a maximum in $\hat x=y/e$, where it takes
the value $y/e$. This means that $s_T\log ({\bar s_T}/{s_T}
)\leq{\bar s_T}/{e}$ which concludes the proof.
\end{pf*}

We are now in the position to prove Theorem~\ref{thmdoob}.
\begin{pf*}{Proof of Theorem~\ref{thmdoob}} By Proposition~\ref
{propdoobpath}(I), for $h(x):=-\frac{p^2}{p-1}x^{p-1}$
we have
%
\begin{equation}
\label{InProofDoobSorryForStupidLabels} \bar S_T^p\leq
\sum_{i=0}^{T-1}h(\bar S_i)
(S_{i+1}-S_i)-\frac
{p}{p-1}S_0^p+
\biggl(\frac{p}{p-1} \biggr)^p S_T^p.
\end{equation}
Since $S$ is a submartingale and $h$ is negative, $\E[\sum_{i=0}^{T-1}h(\bar S_i)(S_{i+1}-S_i)]\leq0$ and thus \eqref
{eqstrongdoob} [and consequently \eqref{intro1}] follows from
\eqref
{InProofDoobSorryForStupidLabels} by taking expectations.

Inequality \eqref{intro2} follows from Proposition~\ref
{propdoobpath}(II) in the same fashion.
\end{pf*}

\begin{remark}
Given the terminal law $\mu$ of a martingale $S$, Hobson~\cite{Ho11},
Section $3.7$, also provides pathwise hedging strategies for lookback
options on $S$. As opposed to the strategies given in Proposition $\ref
{propdoobpath}$, we emphasize that the strategies in~\cite{Ho11}
depend on $\mu$.
\end{remark}

\section{\texorpdfstring{Qualitative Doob $L^p$-inequality---Proof of Theorem \protect\ref{SharkDoob}}
{Qualitative Doob L p-inequality---Proof of Theorem 1.2}}\label{sec3}

In this section we prove Theorem~\ref{SharkDoob} as well as the
following result which pertains to $p\in(1,\infty)$.

\begin{theorem}\label{SharkDoobLp}
Let $(S_t)_{t\in[0,T]}$ be a nonnegative submartingale, $S\neq0$
and $1<p<\infty$. Then
%
\begin{equation}
\label{eqSharkDoobLp} \|\bar S_T\|_p\leq
\frac{p}{p-1}\|S_T\|_p-\frac{1}{p-1}
\frac{\|S_0\|^p_p}{\|\bar S_T\|_p^{p-1}}.
\end{equation}

Given the values $\|S_0\|_p$ and $\|S_T\|_p$, inequality \eqref
{eqSharkDoobLp} is best possible. More precisely, given $x_0,x_1\in\R
$, $0< x_0\leq x_1$, there exists a positive, continuous martingale
$S=(S_t)_{t\in[0,T]}$ such that $\|S_0\|_p=x_0,\|S_T\|_p=x_1 $ and
equality holds in \eqref{eqSharkDoobLp}.

Moreover, equality in \eqref{eqSharkDoobLp} holds if and only if $S$
is a nonnegative martingale such that $\bar S$ is continuous and $\bar
S_T= \alpha S_T$, where $\alpha\in[1,\frac{p}{p-1})$.
\end{theorem}
%
\begin{remark}\label{remidentity}
We prove Theorem~$\ref{SharkDoobLp}$ by introducing a pathwise
integral in continuous time. Note that inequality \eqref
{eqSharkDoobLp} can also be obtained without defining such an
integral. However, the definition of the pathwise integral will allow
us to characterize all submartingales for which equality in \eqref
{eqSharkDoobLp} holds.
\end{remark}

\subsection*{Connection between Theorems \protect\ref{SharkDoob} and~\protect\ref{SharkDoobLp}} We now discuss under which
conditions Theorem~\ref
{SharkDoob} and Theorem~\ref{SharkDoobLp} are equivalent for $p=2$.
Recall that Theorem~\ref{SharkDoob} asserts that
%
\begin{equation}
\label{Optimalfinal1} \|\bar S_T\|_2 \leq
\|S_T\|_2+ \|S_T- S_0
\|_2
\end{equation}
and Theorem~\ref{SharkDoobLp} reads, in the case of $p=2$, as
%
\begin{equation}
\label{eqSharkDoobLp1} \|\bar S_T\|_2 \leq2
\|S_T\|_2- \frac{\|S_0\|_2^2}{\|\bar S_T\|_2}.
\end{equation}
\begin{itemize}
\item If $S$ is a \textit{martingale}, then \eqref{Optimalfinal1} and
\eqref{eqSharkDoobLp1} are equivalent. Indeed, rearranging \eqref
{eqSharkDoobLp1} yields
%
\begin{equation}
\label{eqpsi} \psi\bigl(\|\bar S_T\|_2\bigr):=\frac{1}{2}\|
\bar S_T\|_2 + \frac{\|S_0\|_2^2}{2\|\bar S_T\|_2} \leq\|S_T
\|_2,
\end{equation}
and by inverting the strictly monotone function $\psi$ on $[\|S_0\|_2,\infty)$, we obtain
\[
\|\bar S_T\|_2 \leq\psi^{-1}\bigl(\|S_T
\|_2\bigr)=\|S_T\|_2+ \sqrt{
\|S_T\|_2^2- \|S_0
\|_2^2}.
\]
Since $S$ is a martingale, $\sqrt{ \|S_T\|_2^2- \|S_0\|_2^2}=\|
S_T-S_0\|_2$, which gives \eqref{Optimalfinal1}.\vadjust{\goodbreak}
\item If $S$ is a true submartingale, then the estimate in \eqref
{Optimalfinal1} is in fact stronger than~\eqref{eqSharkDoobLp1}. This
follows from the above reasoning and the fact that for a true
submartingale, we have $\sqrt{ \|S_T\|_2^2- \|S_0\|_2^2}> \|S_T-S_0\|_2$.
\item Clearly, it would be desirable to also obtain for general $p$ an
inequality of the type \eqref{Optimalfinal1}, which is in the case of a
martingale $S$ equivalent to \eqref{eqSharkDoobLp}, and where $\bar
S_T$ only appears on the left-hand side. By similar reasoning as for
$p=2$, finding such an inequality is tantamount to inverting the function
\[
\psi(x)=\frac{p-1}{p}x+\frac{\|S_0\|_p^p}{px^{p-1}},
\]
which is strictly monotone on $[\|S_0\|_p,\infty)$. Since finding
$\psi^{-1}$ amounts to solving an algebraic equation, there is, in general,
no closed form representation of $\psi^{-1}$ unless $p\in\{2,3,4\}$.
\end{itemize}

\subsection*{Definition of the continuous-time integral}

For a general account on the theory of pathwise stochastic integration
we refer to Bichteler~\cite{Bi81} and Karandi\-kar~\cite{Ka95}. Here we
are interested in the particular case where the integrand is of the
form $h(\bar S )$ and $h$ is monotone and continuous. In this setup a
rather naive and ad hoc approach is sufficient (see Lemma~\ref
{lemhonfoell} below).

Fix \cadlag\ functions $f,g\dvtx [0,T]\to[0,\infty)$ and assume that $g$ is
monotone. We set
%
\begin{equation}
\label{eqdefintcontgen} \int
_0^T g_{t\mbox{-}} \,df_t:=
\lim_{n\to\infty}\sum_{t_i\in\pi
_n}g_{t_i\mbox{-}}
(f_{t_{i+1}}-f_{t_i} )
\end{equation}
if the limit exists for \textit{every} sequence of finite partitions
$\pi_n$ with mesh converging to $0$. The standard argument of mixing
sequences then implies uniqueness. We stress that \eqref
{eqdefintcontgen} exists if and only if the ``nonpredictable
version'' $\int_0^T g_{t} \,df_t=\lim_{n\to\infty}\sum_{t_i\in\pi
_n}g_{t_i} (f_{t_{i+1}}-f_{t_i} )$ exists; in this case the
two values coincide.

By rearranging terms, one obtains the identity
%
\begin{eqnarray}
\label{eqpartintdisc} \sum
_{t_i\in\pi} g_{t_i}(f_{t_{i+1}}-f_{t_i})&=&-
\sum_{t_i\in\pi
}f_{t_i}(g_{t_{i+1}}-g_{t_{i}})+g_Tf_T-g_0f_0
\nonumber
\\[-8pt]
\\[-8pt]
\nonumber
&&{}-
\overbrace{\sum_{t_i\in
\pi
}(g_{t_{i+1}}-g_{t_i})
(f_{t_{i+1}}-f_{t_i})}^{(*)}.
\end{eqnarray}
If it is possible to pass to a limit on either of the two sides, one
can do so on the other. Hence, $\int_0^T g_t \,df_t$ is defined whenever
$\int_0^T f_t \,dg_t$ is defined and vice versa, since the monotonicity\vadjust{\goodbreak}
of $g$ implies that $(*)$ converges. In this case we obtain the
integration-by-parts formula
%
\begin{equation}
\label{eqpartintcont} \qquad\int_0^T
g_t \,df_t=-\int_0^T
f_t \,dg_t+g_Tf_T-g_0f_0-
\sum_{0\leq t\leq
T}(g_t-g_{t\mbox{-}})
(f_t-f_{t\mbox{-}}).
\end{equation}

Below we will need that the integrals $\int_0^T h(\bar f_t) \,df_t$ and
$ \int_0^T f_t \,dh(\bar f_t)$ are well-defined whenever $h$ is
continuous, monotone and $f$ is \cadlag. In the case of $ \int_0^T f_t
\,dh(\bar f_t)$, this can be seen by splitting $f$ in its continuous
and its jump part. Existence of $\int_0^T h(\bar f_t) \,df_t$ is then a
consequence of \eqref{eqpartintcont}.

The following lemma establishes the connection of the just defined
pathwise integral with the standard It\^o integral.

\begin{lemma}\label{lemhonfoell}
Let $S$ be a martingale on $(\Omega,\mathcal F,(\mathcal F_t)_{t\geq
0},\PP)$ and $h$ be a monotone and continuous function. 
Then
%
\begin{equation}
\label{eqdefintcontstoch} \bigl(h(\bar
S)\sint S\bigr)_T(\omega)=\int_0^Th
\bigl(\bar S_{t\mbox{-}}(\omega)\bigr) \,dS_t(\omega) \qquad\P
\mbox{-a.s.},
\end{equation}
where the left-hand side refers to the It\^o integral while the
right-hand side appeals to the pathwise integral defined in \eqref
{eqdefintcontgen}.
\end{lemma}
\begin{pf}
Karandikar~\cite{Ka95}, Theorem 2, proves that
\[
\bigl(h(\bar S)\sint S\bigr)_T(\omega)=\lim_{n\to\infty}\sum
_{t_i\in\pi
_n}h\bigl(\bar S_{t_i\mbox{-}}(\omega)\bigr)
\bigl(S_{t_{i+1}}(\omega)-S_{t_i}(\omega ) \bigr)
\]
for a suitably chosen sequence of random partitions $\pi_n, n\geq1$.
According to the above discussion,
$\int_0^T\!h(\bar S_{t\mbox{-}}(\omega)) \,dS_t(\omega)\!=\!\lim_{n\to
\infty
}\sum_{t_i\in\pi_n}\!h(\bar S_{t_i\mbox{-}}(\omega))
(S_{t_{i+1}}(\omega)-S_{t_i}(\omega) )$ for any choice of
partitions $\pi_n(\omega), n\geq1$, with mesh converging to $0$.
\end{pf}

We are now able to establish a continuous-time version of
Proposition~\ref{propdoobpath}.

\begin{proposition}\label{propdoobpathcont}
Let $f\dvtx [0,T]\to[0,\infty)$ be \cadlag. Then for $h(x):=-\frac
{p^2}{p-1}x^{p-1}$, we have
%
\begin{equation}
\label{eqdoobLppath} \bar f_T^p
\leq\int_0^Tp^{-1}h(\bar
f_t) \,df_t+\frac{p}{p-1}\bar f_T^{p-1}f_T-
\frac{1}{p-1} f_0^p.
\end{equation}
Equality in \eqref{eqdoobLppath} holds true if and only if $\bar f$
is continuous.
Similarly, a continuous-time version of \eqref{lempweq} also holds true.
\end{proposition}

\begin{pf}
Inequality \eqref{eqdoobLppath} follows from \eqref{fordoobeq} by
passing to limits. We now show that equality in \eqref{eqdoobLppath}
holds iff $\bar f$ is continuous. To simplify notation, we consider the
case $p=2$. Formula \eqref{eqpartintcont} implies
%
\begin{eqnarray}\label{eqdoobLppath1}
\int_0^Th(\bar f_t)
\,df_t&=&4\int_0^T f_t
\,d\bar f_t-4\bar f_Tf_T+4f_0^2\nonumber\\
&&{}+4
\sum_{0\leq t\leq T}(\bar f_t-\bar
f_{t\mbox
{-}}) (f_t-f_{t\mbox{-}}),
\\
&\geq&2\bar f_T^2-4\bar f_Tf_T+2
\bar f_0^2,\nonumber
\end{eqnarray}
where equality in \eqref{eqdoobLppath1} holds iff $\bar f$ is
continuous. Hence, equality in \eqref{eqdoobLppath} holds true iff
$\bar f$ is continuous.
\end{pf}
If we choose $f$ to be the path of a continuous martingale, the
integral in \eqref{eqdoobLppath} is a pathwise version of an Az\'
{e}ma--Yor process; cf.~\cite{ObYo06}, Theorem 3.
%
\begin{remark}\label{ContPathDoob}
Passing to limits in \eqref{eqdoobpath} in Section $\ref
{secproofdoob}$ we obtain that for every \cadlag\ function
$f\dvtx [0,T]\to
[0,\infty)$
\[
\bar f_T^p \leq-\int_0^T
\frac{p^2}{p-1}\bar f_t^{p-1} \,df_t+ \biggl(
\frac{p}{p-1} \biggr)^p f_T^p -
\frac{p}{p-1}f_0^p,\qquad 1<p<\infty.
\]
Alternatively, this can be seen as a consequence of \eqref{eqdoobLppath}.
\end{remark}

\begin{lemma}\label{QualLp}
Let $(S_t)_{t\in[0,T]}$ be a nonnegative submartingale and
$1<p<\infty$. Set $S=M+A$, where $M$ is a martingale and $A$ is an
increasing, predictable process with $A_0=0$. Then
%
\begin{equation}
\label{eqQualLp} \quad\E\bigl[\bar S_T^p\bigr]\leq-
\frac{p}{p-1}\E\bigl[S_0^{p-1}A_T\bigr]+
\frac{p}{p-1}\E \bigl[\bar S_T^{p-1}S_T\bigr]
-\frac{1}{p-1} \E\bigl[S_0^p\bigr].
\end{equation}
Equality holds in \eqref{eqQualLp} if and only if $S$ is a martingale
such that $\bar S$ is a.s. continuous.
\end{lemma}
\begin{pf}
By Proposition~\ref{propdoobpathcont} we find for $h(x)=-\frac
{p^2}{p-1}x^{p-1}$,
%
\begin{equation}
\label{eqQualLpproof1} \bar S_T^p \leq\int
_0^Tp^{-1}h(\bar S_t)
\,dS_t+\frac{p}{p-1}\bar S_T^{p-1}S_T-
\frac{1}{p-1} S_0^p,
\end{equation}
where equality holds iff $\bar S$ is continuous. Since
%
\begin{equation}
\label{eqQualLpproof2} \E \biggl[\int_0^Tp^{-1}h(
\bar S_t) \,dA_t \biggr]\leq-\frac{p}{p-1}\E
\bigl[S_0^{p-1}A_T\bigr],
\end{equation}
\eqref{eqQualLp} follows by taking expectations in \eqref
{eqQualLpproof1}. As the estimate in \eqref{eqQualLpproof2} is an
equality iff $A=0$, we conclude that equality in \eqref{eqQualLp}
holds iff $S$ is a martingale such that $\bar S$ is continuous.
\end{pf}

We note that in the case of $p=2$,~\cite{AzYo79}, Corollary 2.2.2, also
implies that equality in \eqref{eqQualLp} holds for every continuous
martingale $S$.

\begin{pf*}{Proof of Theorems~\ref{SharkDoobLp} and~\ref
{SharkDoob}}
By Lemma~\ref{QualLp} and H\"{o}lder's inequality we have
%
\begin{eqnarray}
\label{PostCS1}\qquad \|\bar S_T\|_p^p &\leq&-
\frac{p}{p-1}\E\bigl[S_0^{p-1}A_T\bigr]+
\frac{p}{p-1}\bigl\| \bar S_T^{p-1}S_T
\bigr\|_1-\frac{1}{p-1}\|S_0\|_p^p
\\
\label{PostCS} &\leq&-\frac{p}{p-1}\E\bigl[S_0^{p-1}A_T
\bigr]+\frac{p}{p-1}\|\bar S_T\|^{p-1}_p
\|S_T\|_p-\frac{1}{p-1}\|S_0
\|_p^p,
\end{eqnarray}
where equality in \eqref{PostCS1} holds for every martingale $S$ such
that $\bar S$ is continuous, and equality in \eqref{PostCS} holds
whenever $S_T$ is a constant multiple of $\bar S_T$.
Since $\E[S_0^{p-1}A_T]\geq0$, we obtain \eqref{eqSharkDoobLp} after
dividing by $\|\bar S_T \|_p^{p-1}$.

In order to establish \eqref{Optimal1} in Theorem~\ref{SharkDoob} for
$p=2$, we rearrange terms in~\eqref{PostCS} to obtain
\[
\psi\bigl(\|\bar S_T\|_2\bigr):=\frac{1}{2}\|\bar
S_T\|_2 + \frac{2\E
[S_0A_T]+\|
S_0\|_2^2}{2\|\bar S_T\|_2} \leq\|S_T
\|_2.
\]
Similarly, as in the discussion after Remark~\ref{remidentity} above,
inverting $\psi$ on $[\|S_0\|_2,\infty)$ implies
\[
\|\bar S_T\|_2\leq\|S_T\|_2 +
\sqrt{\|S_T\|_2^2-2
\E[S_0A_T]-\|S_0\|_2^2}.
\]
Since for every submartingale $S$ we have $\sqrt{\|S_T\|_2^2-2\E
[S_0A_T]-\|S_0\|_2^2}=\|S_T-S_0\|_2$, this proves \eqref{Optimal1}.

In order to prove that \eqref{eqSharkDoobLp} [resp., \eqref
{Optimal1}] is attained, we have to ensure the existence of a
$p$-integrable martingale $S$ such that $\bar S$ is continuous and
$S_T$ is a constant multiple of $\bar S_T$. To this end, we may clearly
assume that $x_0=1$. Fix $\alpha\in(1,\frac{p}{p-1})$ and let
$B=(B_t)_{t\geq0}$ be a Brownian motion starting at $B_0=1$. Consider
the process
$B^{\tau_\alpha}=(B_{t\wedge\tau_\alpha})_{t\geq0}$ obtained by
stopping $B$ at the stopping time
\[
\tau_\alpha:=\inf \{t>0 \dvtx  B_t\leq\bar B_t/
\alpha \}.
\]
This stopping rule corresponds to the Az\'{e}ma--Yor solution of the
Skorokhod embedding problem $(B,\mu)$ (cf.~\cite{AzYo79}) where the
probability measure $\mu$ is given by
\[
\frac{d\mu}{dx}=\frac{\alpha^{-{1}/{(\alpha-1)}}}{(\alpha
-1)}x^{-{(2\alpha-1)}/{(\alpha-1)}}{1}_{[\alpha^{-1},\infty)}(x).
\]
Clearly $B^{\tau_\alpha}$ is a uniformly integrable martingale.
Therefore the process $(S_t)_{t\in[0,T]}$ defined as $
{S_t:=B_{{t}/{(T-t)}\wedge\tau_\alpha}}$ is a nonnegative martingale
satisfying $S_T=\bar S_T/\alpha$. $S_T$ is $p$-integrable for $\alpha
\in
(1,\frac{p}{p-1})$ and $\|S_T\|_p$ runs through the interval
$(1,\infty
)$ while~$\alpha$ runs in $(1,\frac{p}{p-1})$. This concludes the proof.

In fact, note that the proof shows that equality in \eqref
{eqSharkDoobLp} holds if and only if~$S$ is a nonnegative martingale
such that $\bar S$ is continuous and $\bar S_T= \alpha S_T$, where
$\alpha\in[1,\frac{p}{p-1})$.
\end{pf*}

\section*{Acknowledgment}
The authors thank Jan Ob\l{}oj for insightful comments
and remarks.

%


\printaddresses

\end{document}